\documentclass[12pt,reqno]{amsart}

\usepackage{mathdots}

\usepackage{color}

%
\usepackage{pdfsync}

%
\usepackage{amssymb}


%

%
\usepackage{mathrsfs}

%

%

%
\DeclareMathAlphabet{\mathpzc}{OT1}{pzc}{m}{it}

%
\usepackage[all]{xy}

%
\usepackage{tikz}
\usetikzlibrary{arrows,decorations.pathmorphing,decorations.pathreplacing,positioning,shapes.geometric,shapes.misc,decorations.markings,decorations.fractals,calc,patterns}



\usepackage{float}

\usepackage[bottom]{footmisc}

%
\entrymodifiers={+!!<0pt,\fontdimen22\textfont2>}

\setlength{\textwidth}{170mm}
\setlength{\textheight}{249mm}
\addtolength{\oddsidemargin}{-2.05cm}
\addtolength{\evensidemargin}{-2.05cm}
\addtolength{\topmargin}{-21mm}

\raggedbottom


%

%

\def\BZ{\mathbb{Z}}

%

%

%


%

\def\sC{\mathsf{C}}

\def\sR{\mathsf{R}}

\def\sT{\mathsf{T}}

%


%

\def\add{\operatorname{add}}

\def\adots{\mathinner{\mkern1mu\raise1.0pt\vbox{\kern7.0pt\hbox{.}}\mkern2mu\raise4.0pt\hbox{.}\mkern2mu\raise7.0pt\hbox{.}\mkern1mu}}

\def\ast{{\textstyle *}}

\def\coind{\operatorname{coind}}

\def\corad{\operatorname{corad}}

\def\dddots{\mathinner{\mkern1mu\raise10.0pt\vbox{\kern7.0pt\hbox{.}}\mkern2mu\raise5.3pt\hbox{.}\mkern2mu\raise1.0pt\hbox{.}\mkern1mu}}
\def\dddotssmall{\mathinner{\mkern1mu\raise7.0pt\vbox{\kern7.0pt\hbox{.}}\mkern-1mu\raise4pt\hbox{.}\mkern-1mu\raise1.0pt\hbox{.}\mkern1mu}}

\def\End{\operatorname{End}}

\def\fl{\mathsf{fl}}

\def\Gr{\operatorname{Gr}}

\def\Hom{\operatorname{Hom}}

\def\ind{\operatorname{ind}}
\def\indec{{\mathsf{ind}}}

\def\K{\operatorname{K}}

\def\Ker{\operatorname{Ker}}

\def\mod{\mathsf{mod}}
\def\Mod{\mathsf{Mod}}

\def\obj{\operatorname{obj}}

\def\rad{\operatorname{rad}}

\def\SL2{\operatorname{SL}_2}

\def\split{\operatorname{split}}

%
\numberwithin{equation}{section}


%

%

%
\newtheorem{Lemma}{Lemma}[section]
\newtheorem{Theorem}[Lemma]{Theorem}
\newtheorem{Proposition}[Lemma]{Proposition}

\theoremstyle{definition}
\newtheorem{Definition}[Lemma]{Definition}
\newtheorem{Setup}[Lemma]{Setup}

\newtheorem{Remark}[Lemma]{Remark}

\newtheorem{Example}[Lemma]{Example}

\newtheorem{bfhpg}[Lemma]{}               
\newtheorem*{bfhpg*}{}

\begin{document}

\setlength{\parindent}{0pt}
\setlength{\parskip}{7pt}

\title[Generalised friezes and modified Caldero-Chapoton,
II]{Generalised friezes and a modified Caldero-Chapoton map depending
  on a rigid object, II}

\author{Thorsten Holm}
\address{Institut f\"{u}r Algebra, Zahlentheorie und Diskrete
Mathematik, Fa\-kul\-t\"at f\"ur Mathematik und Physik, Leibniz
Universit\"{a}t Hannover, Welfengarten 1, 30167 Hannover, Germany}
\email{holm@math.uni-hannover.de}
\urladdr{http://www.iazd.uni-hannover.de/\~{ }tholm}

\author{Peter J\o rgensen}
\address{School of Mathematics and Statistics,
Newcastle University, Newcastle upon Tyne NE1 7RU, United Kingdom}
\email{peter.jorgensen@ncl.ac.uk}
\urladdr{http://www.staff.ncl.ac.uk/peter.jorgensen}


\keywords{Auslander-Reiten triangle, categorification, cluster
  algebra, cluster category, cluster tilting object, cluster tilting
  subcategory, rigid object, rigid subcategory, Serre functor,
  triangulated category}

\subjclass[2010]{05E10, 13F60, 16G70, 18E30}

\begin{abstract} 

It is an important aspect of cluster theory that cluster categories
are ``categorifications'' of cluster algebras.  This is expressed
formally by the (original) Caldero-Chapoton map $X$ which sends certain
objects of cluster categories to elements of cluster algebras.

\medskip
\noindent
Let $\tau c \rightarrow b \rightarrow c$ be an Auslander-Reiten
triangle.  The map $X$ has the salient property that $X( \tau c )X( c ) -
X( b ) = 1$.  This is part of the definition of a so-called frieze,
see \cite{AD}.

\medskip
\noindent
The construction of $X$ depends on a cluster tilting object.  In a
previous paper \cite{HJ}, we introduced a modified Caldero-Chapoton
map $\rho$ depending on a rigid object; these are more general than
cluster tilting objects.  The map $\rho$ sends objects of sufficiently
nice triangulated categories to integers and has the key property that
$\rho( \tau c )\rho( c ) - \rho( b )$ is $0$ or $1$.  This is part of
the definition of what we call a generalised frieze.

\medskip
\noindent
Here we develop the theory further by constructing a modified
Caldero-Chapoton map, still depending on a rigid object, which sends
objects of sufficiently nice triangulated categories to elements of a
commutative ring $A$.  We derive conditions under which the map is a
generalised frieze, and show how the conditions can be satisfied if
$A$ is a Laurent polynomial ring over the integers.

\medskip
\noindent
The new map is a proper generalisation of the maps $X$ and $\rho$.

\end{abstract}

\maketitle

\setcounter{section}{-1}
\section{Introduction}
\label{sec:introduction}

The (original) Caldero-Chapoton map $X$ is an important object in
cluster theory.  The arguments of $X$ are certain objects of a cluster category, and the values are the corresponding elements of a cluster algebra.  The map $X$ expresses
that the cluster category is a categorification of the
cluster algebra, see \cite{CC}, \cite{CK}, \cite{CK2}, \cite{FK},
\cite{Palu}.  For example, Figure \ref{fig:AR_quiver} shows the
Auslander-Reiten (AR) quiver of $\sC( A_5 )$, the cluster category of
Dynkin type $A_5$, with a useful ``coordinate system''.  Figure
\ref{fig:frieze} shows the AR quiver again, with the values of $X$ on
the indecomposable objects of $\sC( A_5 )$.  The values are Laurent
polynomials over $\BZ$; indeed, the cluster algebra consists of such
Laurent polynomials.
\begin{figure}
\[
  \xymatrix @+1.8pc @!0 {
    *+[blue]{\{\, 5,7 \,\}} \ar[dr] && \{\, 6,8 \,\} \ar[dr] && *+[blue]{\{\, 1,7 \,\}} \ar[dr] && \{\, 2,8 \,\} \ar[dr] && \{\, 1,3 \,\} \ar@{.}[dd] \\
    & \{\, 5,8 \,\} \ar[dr] \ar[ur] && \{\, 1,6 \,\} \ar[dr] \ar[ur] && *+[red]{\{\, 2,7 \,\}} \ar[dr] \ar[ur] && \{\, 3,8 \,\} \ar[dr] \ar[ur] \\
    \{\, 4,8 \,\} \ar[dr] \ar[ur] \ar@{.}[uu] \ar@{.}[dd] && \{\, 1,5 \,\} \ar[dr] \ar[ur] && \{\, 2,6 \,\} \ar[dr] \ar[ur] && \{\, 3,7 \,\} \ar[dr] \ar[ur] && \{\, 4,8 \,\} \ar@{.}[uu] \ar@{.}[dd] \\
    & \{\, 1,4 \,\} \ar[dr] \ar[ur] && *+[red]{\{\, 2,5 \,\}} \ar[dr] \ar[ur] && \{\, 3,6 \,\} \ar[dr] \ar[ur] && \{\, 4,7 \,\} \ar[dr] \ar[ur] \\
    \{\, 1,3 \,\} \ar[ur] && *+[blue]{\{\, 2,4 \,\}} \ar[ur] && \{\, 3,5 \,\} \ar[ur] && \{\, 4,6 \,\} \ar[ur] && *+[blue]{\{\, 5,7 \,\}} \\
                        }
\]
\caption{The Auslander-Reiten quiver of the cluster category $\sC( A_5
  )$.  The dotted lines should be identified with opposite
  orientations.  The red vertices show the direct summands of a rigid
  object $R$, and the red and blue vertices show the direct summands
  of a cluster tilting object $T$.} 
\label{fig:AR_quiver}
\end{figure}
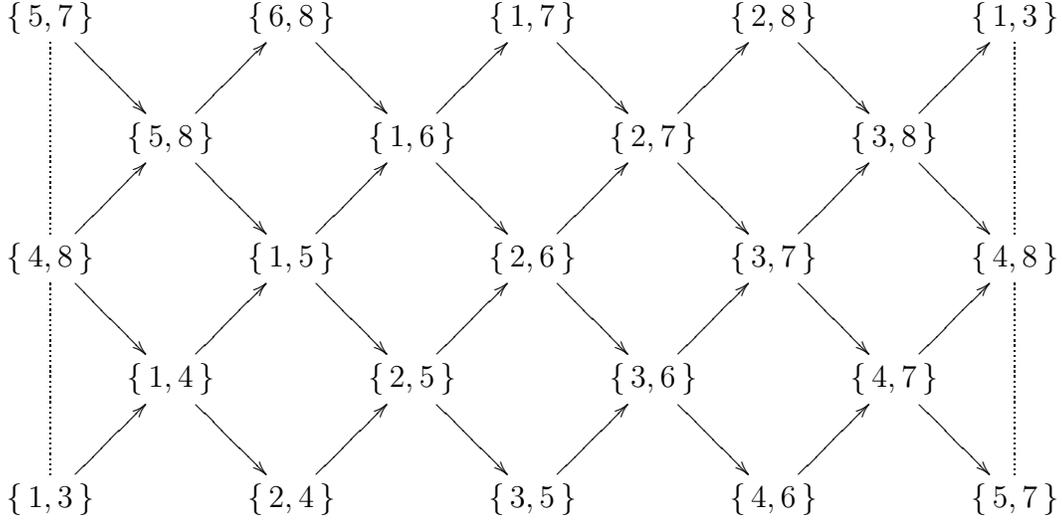
\begin{figure}
\[
  \xymatrix @+1.8pc @!0 {
    z \ar[dr]&& \frac{ ux + uy + yz + z }{ uyz } \ar[dr]&& u \ar[dr]&& \frac{ y + 1 }{ u } \ar[dr]&& \frac{ uvx + vz + xy + y }{ vxy } \\
    & \frac{ ux + yz + z }{ uy } \ar[dr] \ar[ur]&& \frac{ ux + uy + z }{ yz } \ar[dr] \ar[ur]&& y \ar[dr] \ar[ur]&& \frac{ uvx + vyz + vz + xy + xy^2 + y+y^2 }{ uvxy } \ar[dr] \ar[ur]\\
    \frac{ uvx + vyz + vz + y + y^2 }{ uxy } \ar[dr] \ar[ur]\ar@{.}[uu] \ar@{.}[dd] && \frac{ ux + z }{ y } \ar[dr] \ar[ur]&& \frac{ x + y }{ z } \ar[dr] \ar[ur]&& \frac{ vz + xy +  y }{ vx } \ar[dr] \ar[ur]&& \frac{ uvx + vyz + vz + y + y^2 }{ uxy } \ar@{.}[uu] \ar@{.}[dd] \\
    & \frac{ uvx + vz + y }{ xy } \ar[dr] \ar[ur]&& x \ar[dr] \ar[ur]&& \frac{ vz + x + x^2 + xy + y }{ vxz } \ar[dr] \ar[ur]&& \frac{ vz + y }{ x } \ar[dr] \ar[ur]\\
    \frac{ uvx + vz + xy + y }{ vxy }\ar[ur] && v \ar[ur]&& \frac{ x + 1 }{ v } \ar[ur]&& \frac{ vz + x + y }{ xz } \ar[ur]&& z \\
                        }
\]
\caption{The Auslander-Reiten quiver of $\sC( A_5 )$ with values of
  the original Caldero-Chapoton map $X$.  The map depends on the
  cluster tilting object $T$ shown by red and blue vertices in Figure
  \ref{fig:AR_quiver}.}
\label{fig:frieze}
\end{figure}
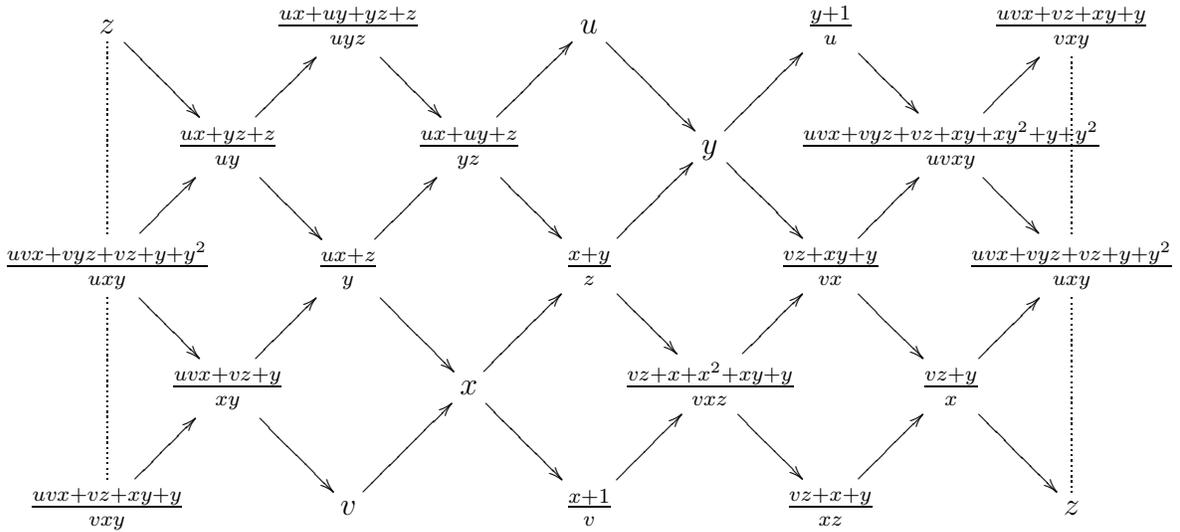

It is a salient property of $X$ that it is a {\em frieze} in
the sense of \cite{AD}, that is, if $\tau c \rightarrow b \rightarrow
c$ is an AR triangle then
\[
  X( \tau c )X( c ) - X( b ) = 1,
\]
see \cite[theorem]{DG} and \cite[prop.\ 3.10]{CC}.  In the case of
$\sC( A_5 )$, this means that for each ``diamond'' in the AR quiver,
of the form
\begin{equation}
\label{equ:diamond}
\vcenter{
  \xymatrix @-1.2pc {
    & b_1 \ar[dr] & \\ \tau c \ar[ur] \ar[dr] & & c \lefteqn{,} \\ & b_2 \ar[ur] &
                    }
        }
\end{equation}
we have
\[
  X( \tau c )X( c ) - X( b_1 )X( b_2 ) = 1.
\]

The definition of $X$ depends on a cluster tilting object $T$.  For
instance, the $X$ shown in Figure \ref{fig:frieze} depends on the $T$
which has the indecomposable summands shown by red and blue vertices in
Figure \ref{fig:AR_quiver}.

This paper is about a modified Caldero-Chapoton map $\rho$ which is
more general than $X$ in two respects: it depends on a rigid object
$R$ and has values in a general commutative ring $A$.  An object $R$
is rigid if $\Hom( R,\Sigma R ) = 0$.  This is much weaker than being
cluster tilting: recall that $T$ is cluster tilting if $\Hom( T ,
\Sigma t ) = 0 \Leftrightarrow t \in \add T \Leftrightarrow \Hom( t ,
\Sigma T ) = 0$.  Our first main result gives conditions under which
$\rho$ is a {\em generalised frieze}, in the sense that if $\tau c
\rightarrow b \rightarrow c$ is an AR triangle then
\[
  \rho( \tau c )\rho( c ) - \rho( b ) \in \{\, 0,1 \,\}.
\]
Our second main result is that the conditions can be satisfied if
$A$ is chosen to be a Laurent polynomial ring over the integers.

Generalised friezes with values in the integers were introduced by
combinatorial means in \cite{BHJ}, and it was shown in \cite{HJ} that
they can be recovered from a modified Caldero-Chapoton map.  The
theory of \cite{HJ} and the original Caldero-Chapoton map are both special cases of the theory developed here.

For example, consider $\sC( A_5 )$ again and let $R$ be the rigid
object which has the indecomposable summands shown by red vertices in
Figure \ref{fig:AR_quiver}.  Our results imply that we can choose $A =
\BZ[ u^{ \pm 1 } , v^{ \pm 1 } , z^{ \pm 1 } ]$, and Figure
\ref{fig:generalised_frieze} shows the AR quiver of $\sC( A_5 )$ with
the values of $\rho$ on the indecomposable objects.
\begin{figure}
\[
  \xymatrix @+1.8pc @!0 {
    z \ar[dr]\ar@{.}[dd] && \frac{ u+z }{ uz } \ar[dr] && u \ar[dr]&& \frac{ 1 }{ u } \ar[dr]&& \frac{ 1+uv+vz }{ v } \ar@{.}[dd] \\
    \begin{tikzpicture}[xscale=0.5,yscale=0.5,baseline=-0.5ex] \fill[gray!25] (-1,0) -- (0,1) -- (1,0) -- (0,-1) -- cycle; \end{tikzpicture}& \frac{ u+z }{ u } \ar[dr] \ar[ur]&& \frac{ u+z }{ z } \ar[dr] \ar[ur]&\begin{tikzpicture}[xscale=0.5,yscale=0.5,baseline=-0.5ex] \fill[gray!25] (-1,0) -- (0,1) -- (1,0) -- (0,-1) -- cycle; \end{tikzpicture}& 1 \ar[dr] \ar[ur]&\begin{tikzpicture}[xscale=0.5,yscale=0.5,baseline=-0.5ex] \fill[gray!25] (-1,0) -- (0,1) -- (1,0) -- (0,-1) -- cycle; \end{tikzpicture}& \frac{ 1+uv+vz }{ uv } \ar[dr] \ar[ur]\\
    \frac{ 1+uv+vz }{ u } \ar[dr] \ar[ur]\ar@{.}[dd] && u+z \ar[dr] \ar[ur]&& \frac{ 1 }{ z } \ar[dr] \ar[ur] && \frac{ 1+vz }{ v } \ar[dr] \ar[ur]&& \frac{ 1+uv+vz }{ u } \ar@{.}[dd] \\
    & 1+uv+vz \ar[dr] \ar[ur]&\begin{tikzpicture}[xscale=0.5,yscale=0.5,baseline=-0.5ex] \fill[gray!25] (-1,0) -- (0,1) -- (1,0) -- (0,-1) -- cycle; \end{tikzpicture}& 1 \ar[dr] \ar[ur]&\begin{tikzpicture}[xscale=0.5,yscale=0.5,baseline=-0.5ex] \fill[gray!25] (-1,0) -- (0,1) -- (1,0) -- (0,-1) -- cycle; \end{tikzpicture}& \frac{ 1+vz }{ vz } \ar[dr] \ar[ur]&& 1+vz \ar[dr] \ar[ur]&\begin{tikzpicture}[xscale=0.5,yscale=0.5,baseline=-0.5ex] \fill[gray!25] (-1,0) -- (0,1) -- (1,0) -- (0,-1) -- cycle; \end{tikzpicture} \\
    \frac{ 1+uv+vz }{ v } \ar[ur]&& v \ar[ur]&& \frac{ 1 }{ v } \ar[ur]&& \frac{ 1+vz }{ z } \ar[ur] && z \\
                        }
\]
\caption{The Auslander-Reiten quiver of $\sC( A_5 )$ with values of
  the modified Caldero-Chapoton map $\rho$.  The map depends on the
  rigid object $R$ shown by red vertices in Figure
  \ref{fig:AR_quiver}.  The values form a generalised frieze, the grey
  diamonds indicating where $\rho( \tau c )\rho( c ) - \rho( b_1
  )\rho( b_2 ) = 1$.}
\label{fig:generalised_frieze}
\end{figure}
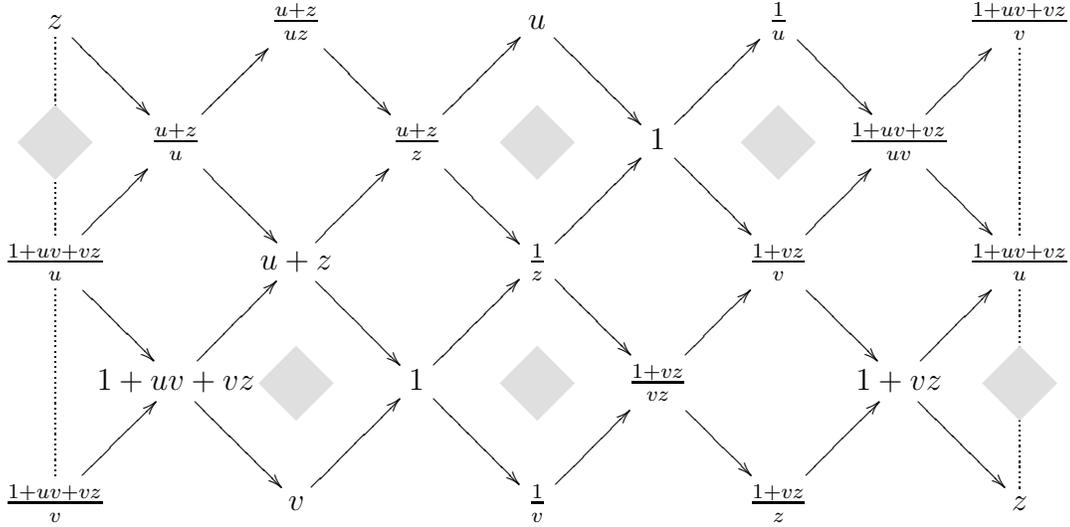
In this case, the generalised frieze property means that for
each ``diamond'' in the AR quiver, of the form \eqref{equ:diamond}, we
have
\[
  \rho( \tau c )\rho( c ) - \rho( b_1 )\rho( b_2 )
  \in \{\, 0,1 \,\}.
\]
The solid grey diamonds in Figure \ref{fig:generalised_frieze} indicate where the displayed expression is equal to $1$.

Let us explain how $\rho$ is defined.  Let $k$ be an algebraically
closed field, $\sC$ an essentially small $\Hom$-finite $k$-linear
triangulated category.  Assume that $\sC$ has split idempotents and
has a Serre functor.  Note that these are the only assumptions on
$\sC$ which is hence permitted to be a good deal more general than a
cluster category.  Let $\Sigma$ denote the suspension functor of $\sC$
and write $\sC( -,- )$ instead of $\Hom_{ \sC }( -,- )$.  Let $R$ be a
rigid object of $\sC$, assumed to be basic for reasons of simplicity,
with endomorphism algebra $E = \End_{ \sC }( R )$.  There is a functor
\[
  \begin{array}{rcl}
    \sC & \stackrel{G}{\longrightarrow} & \mod\,E, \\[2mm]
    c   & \longmapsto                   & \sC( R,\Sigma c ).
  \end{array}
\]
Let $A$ be a commutative ring and let
\[
  \alpha : \obj\,\sC \rightarrow A
\;\;,\;\;
  \beta : \K_0( \mod\,E ) \rightarrow A
\]
be two maps, where $\obj\,\sC$ is the set of objects of $\sC$ and
$\K_0$ denotes the Grothendieck group of an abelian category.

The modified Caldero-Chapoton map is the map $\rho : \obj\,\sC
\rightarrow A$ defined as follows.
\[
  \rho( c )
  = \alpha( c )
    \sum_e \chi \big( \Gr_e ( Gc ) \big) \beta( e )
\]
Here $c \in \sC$ is an object, the sum is over $e \in \K_0( \mod\,E
)$, and $\chi$ denotes the Euler characteristic defined by \'{e}tale
cohomology with proper support.  By $\Gr_e$ is denoted the
Grassmannian of submodules $M \subseteq Gc$ with $\K_0$-class $[ M ] =
e$. 

The original Caldero-Chapoton map is the special case where $R$ is a
cluster tilting object and $\alpha$ and $\beta$ are two particular
maps; see Remark \ref{rmk:original_CC}.  The modified Caldero-Chapoton
map of \cite{HJ} is the special case where $A = \BZ$ and $\alpha$ and
$\beta$ are identically equal to $1$.  In general, $\rho$ is only
likely to be interesting if the maps $\alpha$ and $\beta$ are chosen
carefully, and we formalise this by saying that $\alpha$ and $\beta$
are {\em frieze-like for the AR triangle} $\Delta = \tau c \rightarrow
b \rightarrow c$ if they satisfy the technical conditions in
Definition \ref{def:good}.  Observe that the conditions are trivially
satisfied if $\alpha$ and $\beta$ are identically equal to $1$.  Our
first main result is the following.

\bigskip
{\bf Theorem A. }
{\em 
If $\alpha$ and $\beta$ are frieze-like for each AR triangle in $\sC$,
then the modified Caldero-Chapoton map $\rho : \obj\,\sC \rightarrow
A$ is a generalised frieze in the sense of \cite[def.\ 3.4]{HJ}.  That
is,
\begin{enumerate}

  \item  $\rho( c_1 \oplus c_2 ) = \rho( c_1 )\rho( c_2 )$,

\medskip

  \item  if $\Delta  = \tau c \rightarrow b \rightarrow c$ is an AR
    triangle then $\rho( \tau c )\rho( c ) - \rho( b ) \in \{\, 0,1
    \,\}$.

\end{enumerate}
}
\bigskip

This follows from Proposition \ref{pro:exponential} and Theorem
\ref{thm:A} which even show
\[
  \rho( \tau c )\rho( c ) - \rho( b )
  = \left\{
      \begin{array}{cl}
        0 & \mbox{ if $G( \Delta )$ is a split short exact sequence, } \\[2mm]
        1 & \mbox{ if $G( \Delta )$ is not a split short exact sequence. }
      \end{array}
    \right.
\]
Note that $G( \Delta )$ is never split exact when $R$ is a
cluster tilting object, that is, in the case of the original
Caldero-Chapoton map.

Our second main result is that one can find frieze-like maps $\alpha$
and $\beta$, and hence generalised friezes, with values in Laurent
polynomials.

\bigskip
{\bf Theorem B. }
{\em
Assume that $\sC$ is $2$-Calabi-Yau and that the basic rigid object
$R$ has $r$ indecomposable summands and is a direct summand of a
cluster tilting object.

Then there are maps $\alpha$ and $\beta$ with values in $\BZ[ x_1^{
  \pm 1 }, \ldots, x_r^{ \pm 1 } ]$, using all the variables $x_1$,
$\ldots$, $x_r$, which are frieze-like for each AR triangle in $\sC$.

Hence there is a modified Caldero-Chapoton map $\rho : \obj \sC
\rightarrow \BZ[ x_1^{ \pm 1 }, \ldots, x_r^{ \pm 1 } ]$, using
all the variables $x_1$, $\ldots$, $x_r$, which is a generalised
frieze. 
}
\bigskip

This is established in Definition \ref{def:alpha_and_beta}, Theorem
\ref{thm:B}, and Remark \ref{rmk:app}.  We leave it vague for now what
it means to ``use all the variables $x_1, \ldots, x_r$'', but see
Remark \ref{rmk:app}.  In fact, it is sometimes possible to get values
in Laurent polynomials in more than $r$ variables.  For example, the
basic rigid object $R$ defined by the red vertices in Figure
\ref{fig:AR_quiver} has $r = 2$ indecomposable summands, but the
corresponding $\rho$ has values in $\BZ[ u^{ \pm 1 }, v^{ \pm 1 },
z^{ \pm 1 } ]$ as shown in Figure \ref{fig:generalised_frieze}.

It is natural to ask if the $\BZ$-algebra generated by the values
of $\rho$ is an interesting object.  In particular, one can ask how it
is related to cluster algebras and how it is affected by mutation of
rigid objects as defined in \cite[sec.\ 2]{MP}; see the questions in
Section \ref{sec:questions}.

The paper is organised as follows: Section \ref{sec:conditions}
defines what it means for $\alpha$ and $\beta$ to be frieze-like and
proves Theorem A.  Section \ref{sec:construction} proves Theorem B.
Section \ref{sec:example} shows how to obtain the example in Figure
\ref{fig:generalised_frieze}.  Section \ref{sec:questions} poses some
questions.

\section{The frieze-like condition on the maps $\alpha$ and $\beta$
implies that $\rho$ is a generalised frieze}
\label{sec:conditions}

This section proves Theorem A in the introduction.  It is a consequence of Proposition
\ref{pro:exponential} and Theorem \ref{thm:A}.

We start by setting up items to be used in the rest of the paper.
Instead of a basic rigid object $R$ we will work with a rigid
subcategory $\sR$.  This is more general since we can set $\sR =
\add\,R$ when $R$ is given, but not every $\sR$ has this form, see
\cite[sec.\ 6]{JP}.  The higher generality means that the definitions
of $G$ and $\beta$ are different from those in the introduction.

\begin{Setup}
\label{set:blanket}
Let $k$ be an algebraically closed field and let $\sC$ be an
essentially small $k$-linear $\Hom$-finite triangulated category with
split idempotents.  Hence $\sC$ is a Krull-Schmidt category.

Assume that $\sC$ has a Serre functor.  Hence it has AR triangles by
\cite[thm.\ A]{RVdB}, and the Serre functor is $\Sigma \circ \tau$
where $\Sigma$ is the suspension functor and $\tau$ is the AR
translation.

Let $\sR$ be a full subcategory of $\sC$ which is closed under direct
sums and summands, is functorially finite, and rigid, that is, $\sC(
\sR,\Sigma \sR ) = 0$.

The category of $k$-vector spaces is denoted $\Mod\,k$ and the
category of $k$-linear contravariant functors $\sR \rightarrow
\Mod\,k$ is denoted $\Mod\,\sR$.  It is a $k$-linear abelian category
and its full subcategory of objects of finite length is denoted
$\fl\,\sR$.

Let $A$ be a commutative ring and let
\[
  \alpha : \obj\,\sC \rightarrow A
\;\;,\;\;
  \beta : \K_0( \fl\,\sR ) \rightarrow A
\]
be maps which are ``exponential'' in the sense that
\begin{align*}
  \alpha( 0 ) = 1 \;\; , \;\;
  & \alpha( c \oplus d ) = \alpha( c )\alpha( d ), \\[2mm]
  \beta( 0 ) = 1 \;\; , \;\;
  & \beta( e + f ) = \beta( e )\beta( f ).
\end{align*}
\end{Setup}

\begin{bfhpg}
[The modified Caldero-Chapoton map]
\label{bfhpg:CC}
There is a functor
\[
  \begin{array}{rcl}
    \sC & \stackrel{G}{\longrightarrow} & \Mod\,\sR, \\[2mm]
    c   & \longmapsto                   & \sC( -,\Sigma c ) |_{ \sR }.
  \end{array}
\]
The modified Caldero-Chapoton map is defined by the following
formula.
\begin{equation}
\label{equ:CC}
  \rho( c )
  = \alpha( c )
    \sum_e \chi \big( \Gr_e ( Gc ) \big) \beta( e )
\end{equation}
The sum is over $e \in \K_0( \fl\,\sR )$, and $\Gr_e( Gc )$ is the
Grassmannian of submodules $M \subseteq Gc$ where $M$ has finite
length in $\Mod\,\sR$ and class $[ M ] = e$ in $\K_0( \fl\,\sR )$.
The notation is otherwise as explained in the introduction.


The formula may not make sense for each $c \in \sC$, but it does make
sense if $Gc$ has finite length in $\Mod\, \sR$ since then $\Gr_e( Gc
)$ is finite-dimensional and non-empty for only finitely many values
of $e$; see \cite[1.6 and 1.8]{JP}.  When the formula makes
sense, it defines an element $\rho( c ) \in A$.  Note that
\[
  \rho( 0 ) = 1.
\]

\end{bfhpg}

\begin{Proposition}
\label{pro:exponential}
Let $a,c \in \sC$ be objects such that $Ga,Gc$ have finite length in
$\Mod\, \sR$.  Then $G( a \oplus c )$ has finite length in $\Mod\,
\sR$ and
\[
  \rho( a \oplus c ) = \rho( a )\rho( c ).
\]
\end{Proposition}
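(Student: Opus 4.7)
My plan is to reduce the statement to three facts: the additivity of $G$, the exponential property assumed for $\alpha$ and $\beta$, and a multiplicativity identity for Euler characteristics of submodule Grassmannians under direct sums.

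First I would note that $G = \sC(-,\Sigma(-))|_\sR$ is an additive functor in its second argument (indeed, $\sC(-,\Sigma-)$ is bilinear in both slots), so $G(a \oplus c) \cong Ga \oplus Gc$ canonically in $\Mod\,\sR$. Since $\fl\,\sR$ is closed under extensions (and in particular under finite direct sums) inside $\Mod\,\sR$, the finite-length hypothesis on $Ga$ and $Gc$ passes to $G(a \oplus c)$, so $\rho(a \oplus c)$ is well-defined via the formula \eqref{equ:CC}.

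The key step is the direct-sum decomposition
\[
  \chi\bigl(\Gr_e(Ga \oplus Gc)\bigr)
  = \sum_{e_1 + e_2 = e} \chi\bigl(\Gr_{e_1}(Ga)\bigr)\,\chi\bigl(\Gr_{e_2}(Gc)\bigr),
\]
an identity on Euler characteristics which holds because $\Gr_e(M \oplus N)$ admits a constructible stratification indexed by the classes $e_1 = [L \cap M]$ and $e_2 = [L/(L \cap M)]$, whose strata are vector bundles over the products $\Gr_{e_1}(M) \times \Gr_{e_2}(N)$; this is the standard input used in all Caldero-Chapoton-style proofs of exponentiality, and I would invoke it as a known fact (or cite the corresponding lemma from Caldero-Chapoton / \cite{HJ}).

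Given this identity, the computation is mechanical:
\[
\begin{aligned}
  \rho(a \oplus c)
  &= \alpha(a \oplus c) \sum_e \chi\bigl(\Gr_e(G(a \oplus c))\bigr) \beta(e) \\
  &= \alpha(a)\alpha(c) \sum_{e_1, e_2} \chi\bigl(\Gr_{e_1}(Ga)\bigr)\chi\bigl(\Gr_{e_2}(Gc)\bigr) \beta(e_1 + e_2) \\
  &= \Bigl(\alpha(a)\sum_{e_1} \chi\bigl(\Gr_{e_1}(Ga)\bigr)\beta(e_1)\Bigr) \Bigl(\alpha(c)\sum_{e_2} \chi\bigl(\Gr_{e_2}(Gc)\bigr)\beta(e_2)\Bigr) \\
  &= \rho(a)\rho(c),
\end{aligned}
\]
using the exponential property of $\alpha$ in the first line and of $\beta$ in going from the second to the third. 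The only nontrivial ingredient is the Euler-characteristic identity above; everything else is bookkeeping.
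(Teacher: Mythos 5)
Your proof is correct and follows essentially the same route as the paper: the paper also reduces everything to the Euler-characteristic identity $\chi\big(\Gr_g(G(a\oplus c))\big)=\sum_{e+f=g}\chi\big(\Gr_e(Ga)\big)\chi\big(\Gr_f(Gc)\big)$, which it obtains by applying the auxiliary spaces $X_{e,f}$ of \cite[sec.\ 2]{HJ} (Lemma 2.4(i+v) and Remark 2.5 there) to the split triangle $a\to a\oplus c\to c$ --- exactly the constructible stratification you describe. The remaining bookkeeping with the exponential properties of $\alpha$ and $\beta$ is the same in both arguments.
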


\begin{proof}
The statement about the length of $G( a \oplus c )$ is clear.

It is immediate that
\begin{equation}
\label{equ:product}
  \rho( a )\rho( c )
  = \alpha( a \oplus c )
    \sum_g 
    \Big( 
      \sum_{ e+f=g } \chi \big( \Gr_e ( Ga ) \times \Gr_f ( Gc ) \big)
    \Big) \beta( g )
  = ( \ast ).
\end{equation}
In \cite[sec.\ 2]{HJ} we considered a pair of morphisms $a \rightarrow
b \rightarrow c$ in $\sC$ and introduced auxiliary spaces $X_{ e,f }$.
If we set $a \rightarrow b \rightarrow c$ equal to the canonical
morphisms $a \rightarrow a \oplus c \rightarrow c$, then \cite[lem.\
2.4(i+v)]{HJ} and \cite[rmk.\ 2.5]{HJ} mean that we can compute as
follows.
\[
  ( \ast )
  = \alpha( a \oplus c )
    \sum_g \Big( \sum_{ e+f = g } \chi( X_{ e,f } ) \Big) \beta( g )
  = \alpha( a \oplus c )
    \sum_g \chi \Big( \Gr_g \big( G( a \oplus c ) \big) \Big)\beta( g )
  = \rho( a \oplus c )
\]
\end{proof}

\begin{Definition}
[Frieze-like $\alpha$ and $\beta$]
\label{def:good}
Let 
\[
  \Delta = \tau c \rightarrow b \rightarrow c
\]
be an AR triangle in $\sC$ and assume that $Gc$ and $G( \tau c )$ have 
finite length in $\Mod\, \sR$.  We say that {\em $\alpha$ and $\beta$
are frieze-like for $\Delta$} if the following hold.
\begin{enumerate}

  \item  If $c \not\in \sR \cup \Sigma^{ -1 }\sR$ and $G( \Delta )$
    is a split short exact sequence, then
\[
  \alpha( b ) = \alpha( c \oplus \tau c ).
\]

\medskip

  \item  If $c \not\in \sR \cup \Sigma^{ -1 }\sR$ and $G( \Delta )$
    is a non-split short exact sequence, or if $c = \Sigma^{-1 }r
    \in \Sigma^{ -1 }\sR$, then
\[
  \alpha( b ) = \alpha( c \oplus \tau c ) 
\;\;,\;\;
  \alpha( c \oplus \tau c )\beta \big( [ Gc ] \big) = 1.
\]

\medskip

  \item  If $c = r \in \sR$, then
\[
  \alpha( c \oplus \tau c ) = 1
\;\;,\;\;
  \alpha( b ) = \beta \big( [S_r] \big) 
\]
where $S_r \in \Mod\,\sR$ is the simple object supported at $r$, see
\cite[prop.\ 2.2]{AusRepII}.

\end{enumerate}
\end{Definition}

\begin{Remark}
Note that if $c \not\in \sR \cup \Sigma^{ -1 }\sR$ then $G( \Delta )$
is a (split or non-split) short exact sequence, see \cite[lem.\
1.12(iii)]{HJ}. 
\end{Remark}

\begin{Theorem}
\label{thm:A}
Let 
\[
  \Delta = \tau c \rightarrow b \rightarrow c
\]
be an AR triangle in $\sC$.  Assume that $Gc$ and $G( \tau c )$ have 
finite length in $\Mod\, \sR$ and that $\alpha$ and $\beta$ are
frieze-like for $\Delta$.  Then $Gb$ has finite length in $\Mod\, \sR$
and
\[
  \rho( \tau c )\rho( c ) - \rho( b )
  =
  \left\{
    \begin{array}{cl}
      0 & \mbox{ if $G( \Delta )$ is a split short exact sequence, } \\[2mm]
      1 & \mbox{ if $G( \Delta )$ is not a split short exact sequence. }
    \end{array}
  \right.
\]
\end{Theorem}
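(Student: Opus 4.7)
The plan is to compare the two expressions
\[
  \rho(\tau c)\rho(c) = \alpha(\tau c \oplus c) \sum_g \Big( \sum_{e+f=g} \chi\big(\Gr_e(G\tau c) \times \Gr_f(Gc)\big) \Big) \beta(g)
\]
(obtained from the proof of Proposition \ref{pro:exponential}, via equation \eqref{equ:product}) and
\[
  \rho(b) = \alpha(b) \sum_g \chi\big(\Gr_g(Gb)\big) \beta(g),
\]
and reduce their difference to an expression controlled by the frieze-like conditions in Definition \ref{def:good}. The first step is to establish that $Gb$ has finite length in $\Mod\,\sR$; this is done case by case using the long exact sequence obtained by applying $\sC(-, \Sigma(-))|_{\sR}$ to $\Delta$ and invoking \cite[lem.\ 1.12]{HJ} to identify kernel and cokernel, noting that these belong to $\add(G\sR) \cup \{S_r\}$ and hence are of finite length when $Gc$ and $G\tau c$ are.

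Next I would invoke the auxiliary spaces $X_{e,f}$ from \cite[sec.\ 2]{HJ}, which were set up precisely to interpolate between $\Gr_e(G\tau c) \times \Gr_f(Gc)$ and $\Gr_g(Gb)$. The key identities from loc.\ cit.\ are (a) $\sum_{e+f=g} \chi(X_{e,f}) = \chi(\Gr_g(Gb))$, and (b) a comparison of $\chi(\Gr_e(G\tau c) \times \Gr_f(Gc))$ with $\chi(X_{e,f})$ whose "defect" depends on whether $G(\Delta)$ is a short exact sequence and on whether it splits. Subtracting and collecting terms reduces $\rho(\tau c)\rho(c) - \rho(b)$ to a sum of the shape $\alpha(\tau c \oplus c)\beta([Gc])$ (plus boundary contributions from $\sR$), at least in the generic case.

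The three cases of Definition \ref{def:good} are then dispatched as follows. When $c \notin \sR \cup \Sigma^{-1}\sR$ and $G(\Delta)$ splits short exact, we have $Gb \cong Gc \oplus G\tau c$, so $\Gr_g(Gb)$ stratifies as $\bigsqcup_{e+f=g}\Gr_e(G\tau c)\times\Gr_f(Gc)$ and the two sums above coincide; combined with $\alpha(b)=\alpha(c\oplus\tau c)$ from (i) this gives $\rho(\tau c)\rho(c) - \rho(b) = 0$. When $G(\Delta)$ is non-split short exact, or when $c \in \Sigma^{-1}\sR$, the residual term from the $X_{e,f}$ comparison is $\alpha(c\oplus\tau c)\beta([Gc])$, which equals $1$ by (ii). Finally when $c = r \in \sR$, one has $Gc = 0$ and $G(\Delta)$ is not short exact; instead $Gb$ is obtained from $G\tau c$ by the short exact sequence $0 \to Gb \to G\tau c \to S_r \to 0$ (using the description of minimal right $\sR$-approximations ending in $r$). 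Direct computation of Grassmannians gives $\sum_g \chi(\Gr_g(Gb))\beta(g) = \beta([S_r])^{-1}\sum_g \chi(\Gr_g(G\tau c))\beta(g)$ times an adjustment, and condition (iii) then matches factors so that $\rho(\tau c)\rho(c) - \rho(b) = 1$.

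The main obstacle is the last case $c \in \sR$, since $G(\Delta)$ fails to be a short exact sequence and the machinery of \cite[sec.\ 2]{HJ} must be reworked; the correct bookkeeping is exactly what condition (iii) of Definition \ref{def:good} is engineered to supply, so the proof hinges on matching the simple $S_r$ that appears in $G(\Delta)$ with the factor $\beta([S_r]) = \alpha(b)$. A secondary technical point is the case $c \in \Sigma^{-1}\sR$, which produces a short exact sequence but with an atypical endpoint; both special cases require a separate invocation of the frieze-like identities rather than a purely formal manipulation of Grassmannians.
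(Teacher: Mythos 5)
Your overall strategy coincides with the paper's proof: the same starting identity for $\rho(\tau c)\rho(c)$ coming from Equation \eqref{equ:product}, the same use of the auxiliary spaces $X_{e,f}$ of \cite[sec.\ 2]{HJ} together with \cite[lem.\ 2.4]{HJ} and \cite[rmk.\ 2.5]{HJ}, the same derivation of finite length of $Gb$ from $G$ being homological, and the same four-way case split in which each clause of Definition \ref{def:good} plays exactly the role you assign to it.

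The one concrete inaccuracy is in the case $c = r \in \sR$. By \cite[lem.\ 1.12(ii)]{HJ} one has $G(\Delta) = I_r \rightarrow \corad I_r \rightarrow 0$, so the relevant short exact sequence is $0 \rightarrow S_r \rightarrow G(\tau c) \rightarrow Gb \rightarrow 0$ (the map $G(\tau c) \rightarrow Gb$ kills the socle), not $0 \rightarrow Gb \rightarrow G(\tau c) \rightarrow S_r \rightarrow 0$ as you wrote. The direction matters for the Grassmannian bookkeeping: what one needs is that every nonzero submodule of the indecomposable injective $G(\tau c) = I_r$ contains the simple socle $S_r$, which yields $\Gr_e\big(G(\tau c)\big) \cong \Gr_{e - [S_r]}(Gb)$ for $e \neq 0$ and hence $\sum_e \chi\big(\Gr_e(G\tau c)\big)\beta(e) = 1 + \beta\big([S_r]\big)\sum_f \chi\big(\Gr_f(Gb)\big)\beta(f)$; with your reversed sequence (which would describe a projective cover rather than an injective envelope) this stratification is not available. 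Once the sequence is corrected the case proceeds dually to the $c \in \Sigma^{-1}\sR$ case, exactly as in the paper, with $\alpha(c \oplus \tau c) = 1$ and $\alpha(b) = \beta\big([S_r]\big)$ from Definition \ref{def:good}(iii) giving $\rho(\tau c)\rho(c) = 1 + \rho(b)$.
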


\begin{proof}
It is clear from the definition of $G$ that it is a homological
functor, so $G( \Delta )$ is an exact sequence (albeit not necessarily
short exact).  The statement about the length of $Gb$ follows.

We split into cases.  First some preparation:
setting $a = \tau c$ in Equation \eqref{equ:product} gives
\begin{equation}
\label{equ:product2}
  \rho( \tau c )\rho( c )
  = \alpha( c \oplus \tau c )
    \sum_g 
    \Big( 
      \sum_{ e+f=g } \chi \big( \Gr_e ( G( \tau c ) ) \times \Gr_f ( Gc ) \big)
    \Big) \beta( g )
    = ( \ast ).
\end{equation}
Moreover, we use the morphisms $a \rightarrow b \rightarrow c$ and
auxiliary spaces $X_{ e,f }$ from \cite[sec.\ 2]{HJ} again, this time
setting $a \rightarrow b \rightarrow c$ equal to the AR triangle
$\tau c \rightarrow b \rightarrow c$.  We can then use the results of
\cite[sec.\ 2]{HJ}.

Case (i):  $c \not\in \sR \cup \Sigma^{ -1 }\sR$ and $G( \Delta )$ is
a split short exact sequence.  

We start from Equation \eqref{equ:product2} and
compute as follows:
\[
  ( \ast )
  \stackrel{\rm (a)}{=}
    \alpha( b )
    \sum_g 
    \Big( 
      \sum_{ e+f=g } \chi ( X_{ e,f } )
    \Big) \beta( g ) \\
  \stackrel{\rm (b)}{=}
    \alpha( b )
    \sum_g \chi \big( \Gr_g( Gb ) \big) \beta( g ) \\
  = \rho(b),
\]
where (a) is by Definition \ref{def:good}(i) and \cite[lem.\
2.4(i+v)]{HJ}, and (b) is by \cite[rmk.\ 2.5]{HJ}.

Case (ii): $c \not\in \sR \cup \Sigma^{ -1 }\sR$ and $G( \Delta )$ is a
non-split short exact sequence.

We start from Equation \eqref{equ:product2} and compute as follows:
\begin{align*}
  ( \ast )
  & =
    \alpha( c \oplus \tau c )
    \Big\{
      \chi \big( \Gr_0 ( G(\tau c) ) \times \Gr_{[Gc]} ( Gc ) \big)
      \beta \big( [Gc] \big)  \\
      & \;\;\;\;\;\;\;\;\;\;\;\;\;\;\;\;\;\;\;\;\;
        + \sum_g \Big(
                   \sum_{ \substack{ e+f=g \\[1mm] (e,f) \neq (0,[Gc]) }}
                                \chi \big(
                                        \Gr_e ( G(\tau c) )
                                        \times \Gr_f ( Gc )
                                      \big)
                 \Big) \beta( g ) 
    \Big\} \\
  & \stackrel{\rm (c)}{=}
    \alpha( c \oplus \tau c ) 
    \Big\{ 
      \beta \big( [Gc] \big) 
      + \sum_g \Big(
                 \sum_{ \substack{ e+f=g \\[1mm] (e,f) \neq (0,[Gc]) }}
                   \chi \big( X_{ e,f } \big)
               \Big) \beta( g )  
    \Big\} \\
  & \stackrel{\rm (d)}{=}
    1 + \alpha(b) \sum_g \Big(
                           \sum_{ \substack{ e+f=g \\[1mm] (e,f) \neq (0,[Gc]) }}
                           \chi \big( X_{ e,f } \big)
                         \Big) \beta( g ) \\
  & \stackrel{\rm (e)}{=}
    1 + \alpha(b) \sum_g \Big(
                           \sum_{ e+f=g }
                           \chi \big( X_{ e,f } \big)
                         \Big) \beta( g ) \\
  & \stackrel{\rm (f)}{=}
    1 + \alpha(b) \sum_g \chi \big( \Gr_g( Gb ) \big) \beta( g ) \\
  & = 1 + \rho( b ),
\end{align*}
where (c) follows from \cite[lem.\ 2.4(ii)+(iv)+(v)]{HJ}, (d) is by
Definition \ref{def:good}(ii), (e) is by \cite[lem.\ 2.4(iii)]{HJ},
and (f) is by \cite[rmk.\ 2.5]{HJ}.

Case (iii): $c = \Sigma^{ -1 }r \in \Sigma^{ -1 }\sR$.  Then
$G( \Delta )$ is not a split short exact sequence, but
\begin{equation}
\label{equ:Pr}
  G( \Delta )
  = G( \tau c \rightarrow b \rightarrow c )
  = 0 \rightarrow \rad\,P_r \rightarrow P_r
\end{equation}
by \cite[lem.\ 1.12(i)]{HJ}, where $P_r = \sC( -,r ) \,|_{ \sR }$ is
the indecomposable projective object of $\Mod\,\sR$ associated with
$r$, see \cite[1.5]{HJ}.  In particular, we have $G( \tau c ) = 0$ whence
$\rho( \tau c ) = \alpha( \tau c )$, and this gives the first equality
in the following computation.
\begin{align*}
  \rho( \tau c )\rho( c )
  & = \alpha( \tau c )\alpha( c )
      \sum_f \chi \big( \Gr_f( Gc ) \big) \beta( f ) \\
  & = \alpha( c \oplus \tau c )
      \sum_f \chi \big( \Gr_f( Gc ) \big) \beta( f ) \\
  & = \alpha( c \oplus \tau c )
      \Big\{
        \chi \big( \Gr_{ [Gc] }( Gc ) \big) \beta \big( [Gc] \big)
        + \sum_{ f \neq [Gc] } \chi \big( \Gr_f( Gc ) \big) \beta( f )
      \Big\} \\
  & \stackrel{\rm (g)}{=}
    \alpha( c \oplus \tau c )
      \Big\{
        \beta \big( [Gc] \big)
        + \sum_{ f \neq [Gc] } \chi \big( \Gr_f( Gc ) \big) \beta( f )
      \Big\} \\
  & \stackrel{\rm (h)}{=}
    \alpha( c \oplus \tau c )
      \Big\{
        \beta \big( [Gc] \big)
        + \sum_{ f } \chi \big( \Gr_f( Gb ) \big) \beta( f )
      \Big\} \\
  & \stackrel{\rm (j)}{=}
    1 + \alpha(b) \sum_{ f } \chi \big( \Gr_f( Gb ) \big) \beta( f ) \\
  & = 1 + \rho( b )
\end{align*}
To see (g), note that for $M' \subseteq Gc$ we have
\begin{equation}
\label{equ:1.2}
  [ M' ] = [ Gc ] \Leftrightarrow M' = Gc
\end{equation}
by \cite[eq.\ (1.2)]{HJ}.  Hence $\Gr_{ [Gc] }( Gc )$ has only a
single point whence $\chi \big( \Gr_{ [Gc] }( Gc ) \big) = 1$.
To see (h), note that Equation \eqref{equ:Pr} says that $Gc$ is an
indecomposable projective object with radical $Gb$.  So the proper
submodules of $Gc$ are precisely all the submodules of $Gb$, whence
Equation \eqref{equ:1.2} implies
\[
  \Gr_f( Gb )
  = \left\{
      \begin{array}{cl}
        \Gr_f( Gc ) & \mbox{ for $f \neq [ Gc ]$, } \\[2mm]
        \emptyset   & \mbox{ for $f = [ Gc ]$ }
      \end{array}
    \right.
\]
and (h) follows.  Finally, (j) holds by Definition
\ref{def:good}(ii). 

Case (iv): $c = r \in \sR$.  Then $G( \Delta )$ is not a split short
exact sequence, but we have
\[
  G( \Delta )
  = G( \tau c \rightarrow b \rightarrow c )
  = I_r \rightarrow \corad\,I_r \rightarrow 0
\]
by \cite[lem.\ 1.12(ii)]{HJ}, where $I_r = \sC( -,\Sigma\tau r ) \,|_{
  \sR }$ is the indecomposable injective object of $\Mod\,\sR$
associated with $r$, see \cite[1.10]{HJ}, and $\corad$ denotes the
quotient by the socle.  Now proceed dually to Case (iii), replacing
Definition \ref{def:good}(ii) by Definition \ref{def:good}(iii).
%
\end{proof}

\section{A construction of frieze-like maps $\alpha$ and $\beta$ with
values in Laurent polynomials}
\label{sec:construction}

This section proves Theorem B in the introduction.  It is a consequence of Definition
\ref{def:alpha_and_beta}, Theorem \ref{thm:B}, and Remark
\ref{rmk:app}.

\begin{Setup}
\label{set:2}
We continue to work under Setup \ref{set:blanket} and 
henceforth add the assumption that $\sC$ is a $2$-Calabi-Yau category with a
cluster tilting subcategory $\sT$ which belongs to a cluster
structure in the sense of \cite[sec.\ II.1]{BIRS}, and which satisfies
$\sR \subseteq \sT$.


Note that the AR translation of $\sC$ is
\[
  \tau = \Sigma
\]  
and that the Serre functor is $\Sigma^2$.
\end{Setup}

\begin{Remark}
When $\sR$ is a rigid subcategory of $\sC$, it is often possible to
find a cluster tilting subcategory $\sT$ with $\sR \subseteq \sT$.
Not always, however: if $\sC$ is a cluster tube, then such a $\sT$
cannot be found since cluster tubes have no cluster tilting
subcategories, see \cite[cor.\ 2.7]{BMV}.
\end{Remark}

\begin{bfhpg}
[Mutation and exchange triangles]
Let $\indec\, \sT$ denote the set of (isomorphism classes of)
indecomposable objects of $\sT$.  Each $t \in \indec\, \sT$ has a
mutation $t^{ \ast }$ which is the unique indecomposable object in
$\sC$ such that $\sT$ remains a cluster tilting subcategory if $t$ is
replaced by $t^{ \ast }$.  There are distinguished triangles
\begin{equation}
\label{equ:exchange_triangles1}
  t^{ \ast } \rightarrow a \rightarrow t
\;\;,\;\;
  t \rightarrow a' \rightarrow t^{ \ast }
\end{equation}
with $a, a' \in \add\big( ( \indec\, \sT ) \setminus t \big)$, known
as exchange triangles, see \cite[sec.\ II.1]{BIRS}.  
\end{bfhpg}

\begin{Definition} 
[The subgroup $N$]
\label{def:N}
The split Grothendieck group of an additive category is denoted by
$\K_0^{ \split }$.  It has a relation $[ a \oplus b ] = [ a ] + [ b ]$
for each pair of objects $a,b$, where $[ a ]$ is the $\K_0^{ \split
}$-class of $a$.

Define a subgroup of $\K_0^{ \split }( \sT )$ as follows.
\begin{equation}
\label{equ:N}
  N = \bigg\langle\, [a] - [a'] 
      \,\bigg|
        \begin{array}{l}
          \mbox{ $s^{ \ast } \rightarrow a \rightarrow s$ \;,\;
                 $s \rightarrow a' \rightarrow s^{ \ast }$
                 are exchange } \\[2mm]
          \mbox{ triangles with 
                 $s \in \indec\, \sT \setminus \indec\, \sR$ }
        \end{array}
      \,\bigg\rangle
\end{equation}
Let
\[
  Q : \K_0^{ \split }( \sT ) \rightarrow \K_0^{ \split }( \sT ) / N
\]
denote the canonical surjection.
\end{Definition}

\begin{bfhpg}
[Simple objects, $\K$-theory, and the homomorphism $\overline{ \theta }$]
\label{bfhpg:simples_and_K}
The inclusion functor $i : \sR \rightarrow \sT$ induces an exact functor
\[
  i^{ \ast } : \Mod\,\sT \longrightarrow \Mod\,\sR
\;\;,\;\;
  i^{ \ast }( M ) = M \circ i.
\]
Each indecomposable object $t \in \indec\, \sT$ gives rise to a simple
object $\overline{S}_t \in \Mod\,\sT$, and each $r \in \indec\, \sR$
gives rise to a simple object $S_r \in \Mod\,\sR$, see \cite[prop.\
2.3(b)]{AusRepII}.  It is not hard to show
\[
  i^{ \ast }\overline{S}_t 
  = \left\{
      \begin{array}{cl}
        S_t & \mbox{ if $t \in \indec\, \sR$, } \\[2mm]
        0   & \mbox{ if $t \in \indec\, \sT \setminus \indec\, \sR$. }
      \end{array}
    \right.
\]
Since $i^{ \ast }$ is exact and sends simple objects to simple objects
or $0$, is preserves finite length so restricts to an exact functor
\[
  i^{ \ast } : \fl\,\sT \rightarrow \fl\,\sR.
\]
Let
\[
  \kappa : \K_0( \fl\,\sT ) \rightarrow \K_0( \fl\,\sR )
\]
be the induced homomorphism.  The source is a free group on the
classes $[ \overline{S}_t ]$ for $t \in \indec\, \sT$ and the target
is a free group on the classes $[ S_r ]$ for $r \in \indec\, \sR$.
The homomorphism $\kappa$ is surjective and given by
\begin{equation}
\label{equ:kappa}
  \kappa\big( [ \overline{S}_t ] \big)
  = \left\{
      \begin{array}{cl}
        [ S_t ] & \mbox{ if $t \in \indec\, \sR$, } \\[2mm]
        0       & \mbox{ if $t \in \indec\, \sT \setminus \indec\, \sR$. }
      \end{array}
    \right.
\end{equation}

There is a functor
\[
  \begin{array}{rcl}
    \sC & \stackrel{\overline{G}}{\longrightarrow} & \Mod\,\sT, \\[2mm]
    c   & \longmapsto                              & \sC( -,\Sigma c ) |_{ \sT },
  \end{array}
\]
and $i^{ \ast } \overline{G} = G$ where $G$ is the functor from
Subsection \ref{bfhpg:CC}.

We define a homomorphism as follows,
\begin{equation}
\label{equ:overlinetheta}
  \overline{\theta} : \K_0( \fl\,\sT ) 
    \rightarrow \K_0^{\split}( \sT )
  \;\;,\;\; \overline{ \theta }
            \big( [ \overline{S}_t ] \big) = [ a ] - [ a' ],
\end{equation}
where $a, a'$ come from the exchange triangles
\eqref{equ:exchange_triangles1}, see \cite[1.5(ii)]{JP}. 
\end{bfhpg}

\begin{bfhpg}
[The homomorphism $\theta$]
It is clear from Equations \eqref{equ:N}, \eqref{equ:kappa}, and \eqref{equ:overlinetheta}
that there is a unique homomorphism $\theta$ which makes the following
square commutative.
\begin{equation}
\label{equ:square}
\vcenter{
  \xymatrix {
    \K_0( \fl\,\sT ) \ar@{->>}_{\kappa}[d] \ar^-{ \overline{ \theta }}[r] & \K_0^{ \split }( \sT ) \ar@{->>}^{Q}[d]\\
    \K_0( \fl\,\sR ) \ar_-{ \theta }[r] & \K_0^{ \split }( \sT ) / N \\
                     }
        }
\end{equation}
\end{bfhpg}

\begin{bfhpg}
[Index and coindex]
\label{bfhpg:ind}
For $c \in \sC$ there is a distinguished triangle $t_1 \rightarrow t_0
\rightarrow c$ with $t_0, t_1 \in \sT$ by \cite[sec.\ 1]{DK}, and the
index $\ind c = [t_0] - [t_1]$ is a well-defined element of $\K_0^{
\split }( \sT )$.  Similarly there is a distinguished triangle $c
\rightarrow \Sigma^2 t^0 \rightarrow \Sigma^2 t^1$ with $t^0, t^1 \in
\sT$, and the coindex $\coind c = [t^0] - [t^1]$ is a well-defined
element of $\K_0^{ \split }( \sT )$. 
%
\end{bfhpg}

\begin{Definition}
[The maps $\alpha$ and $\beta$]
\label{def:alpha_and_beta}
Recall that $A$ is a commutative ring.  Let
\begin{equation}
\label{equ:epsilon_exponential}
  \varepsilon : \K_0^{ \split }( \sT ) / N \rightarrow A
\end{equation}
be a map which is ``exponential'' in the sense that
\[
  \varepsilon( 0 ) = 1
\;\;,\;\;
  \varepsilon( e+f ) = \varepsilon( e )\varepsilon( f ).
\]
Define
\[
  \alpha : \obj\,\sC \rightarrow A
\;\;,\;\;
  \beta : \K_0( \fl\,\sR ) \rightarrow A
\]
by
\begin{equation}
\label{equ:alpha_beta}
  \alpha( c ) = \varepsilon Q( \ind c )
\;\;,\;\;
  \beta( e ) = \varepsilon \theta( e ). 
\end{equation}
It is easy to see that $\alpha$ and $\beta$ satisfy the conditions in
Setup \ref{set:blanket}, and Equation \eqref{equ:CC} now defines a
modified Caldero-Chapoton map $\rho$ with values in $A$. 

Remark \ref{rmk:app} has further comments to the choice of $\varepsilon$ which is crucial to the properties of $\rho$. 
\end{Definition}

\begin{Remark}
\label{rmk:original_CC}
The definition of $\alpha$ and $\beta$ is motivated by the original
Caldero-Chapoton map which is recovered as follows when $\sR = \sT$:
in this case, $N = 0$ and $Q$ is the identity while $\theta =
\overline{ \theta }$, so Equations \eqref{equ:alpha_beta} read
\[
  \alpha( c ) = \varepsilon ( \ind c )
\;\;,\;\;
  \beta( e ) = \varepsilon \overline{ \theta }( e ).
\]
The group $\K_0^{ \split }( \sT )$ is free on the classes $[ t ]$ for
$t \in \ind\,\sT$.  Let $A$ be the Laurent polynomial ring on
generators $x_t$ for $t \in \ind\,\sT$ and set $\varepsilon \big( [ t
] \big) = x_t$ for $t \in \ind\,\sT$.  Then the map $\rho$ from
Equation \eqref{equ:CC} is the original Caldero-Chapoton map, see
\cite[1.8]{JP}. 
\end{Remark}

\begin{Lemma}
\label{lem:coind}
The map $\overline{\theta}$ from Equation \eqref{equ:overlinetheta}
satisfies the following. 
\begin{enumerate}

  \item  If $\overline{G}c$ has finite length in $\Mod\, \sT$ then
\[
  \overline{\theta} \big( [ \overline{G}c ] \big)
  = - ( \ind c + \ind \Sigma c).
\]

\smallskip

  \item Let $\Sigma c \stackrel{ \varphi }{ \longrightarrow } b
  \longrightarrow c$ be an AR triangle in $\sC$.  If $\overline{ G }(
  \Sigma c )$ and $\overline{ G }c$ have finite length in $\Mod\, \sT$
  then
\[
  \ind b
  = \left\{
      \begin{array}{cl}
        - \overline{ \theta } \big( [ \overline{ G }c ] \big)
          & \mbox{ if $c \not\in \sT$, } \\[2mm]
        \overline{ \theta } \big( [ \overline{S}_t ] \big)
          & \mbox{ if $c = t \in \sT$. }
      \end{array}
    \right.
\]
\end{enumerate}
\end{Lemma}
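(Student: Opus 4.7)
I will prove part~(i) first and then deduce part~(ii) from it.

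Part~(i) is essentially Palu's index-coindex formula \cite{Palu}, adapted to the cluster tilting subcategory setting of \cite{JP}. My strategy is to reduce to the case where $\overline{G}c = \overline{S}_t$ is a simple module and compute directly. Both sides of the identity $\overline{\theta}([\overline{G}c]) = -(\ind c + \ind \Sigma c)$ are additive under distinguished triangles in $\sC$ whose $\overline{G}$-image is a short exact sequence in $\Mod\,\sT$: the left side since $\overline{G}$ is homological, and the right side by the octahedral argument originating in \cite{Palu}. Since $\overline{G}c$ has finite length, a composition series by simples can be inductively lifted to triangles in $\sC$. For the base case take $c = t^{\ast}$. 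Applying $\sC(-,-)|_{\sT}$ to the exchange triangle $t \to a' \to t^{\ast} \to \Sigma t$ yields, for $s \in \sT$, the exact sequence $\sC(s,t) \to \sC(s,a') \to \sC(s,\Sigma t^{\ast}) \to 0$, and the right-approximation property of $t \to a'$ (from the cluster structure \cite[Sec.~II.1]{BIRS}) identifies the cokernel as $\sC(s,t)/\rad\sC(s,t)$, giving $\overline{G}t^{\ast} = \overline{S}_t$. Rotations of the two exchange triangles provide index triangles for $t^{\ast}$ and $\Sigma t^{\ast}$ respectively, yielding $\ind t^{\ast} = [a']-[t]$ and $\ind \Sigma t^{\ast} = [t]-[a]$; these sum to $[a']-[a] = -\overline{\theta}([\overline{S}_t])$, as required.

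For part~(ii), I combine part~(i) with the Palu-style additivity of index: if $y \to x \to z \to \Sigma y$ is a distinguished triangle whose $\overline{G}$-image is a short exact sequence in $\Mod\,\sT$, then $\ind x = \ind y + \ind z$. In the sub-case $c \notin \sT \cup \Sigma^{-1}\sT$, the 2-Calabi--Yau analogue of \cite[Lem.~1.12(iii)]{HJ} (with $\sR$ replaced by $\sT$) shows that $\overline{G}$ applied to the AR triangle $\Sigma c \to b \to c$ produces a short exact sequence, so $\ind b = \ind \Sigma c + \ind c = -\overline{\theta}([\overline{G}c])$ by part~(i).

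In the case $c = t \in \sT$, I bootstrap from the previous sub-case. First, $\Sigma t \notin \sT$: otherwise the rigidity $\sC(\sT, \Sigma \sT) = 0$ would force $\sC(t, \Sigma^{2}t) = 0$, contradicting $\sC(t, \Sigma^{2}t) \cong D\sC(t,t) \neq 0$ from 2-Calabi--Yau duality. The shifted AR triangle $\Sigma^{2}t \to \Sigma b \to \Sigma t$ is the AR triangle at $\Sigma t$ (since $\tau = \Sigma$), and under the mild assumption $\Sigma^{2}t \notin \sT$ it lies in the previous sub-case, giving $\ind \Sigma b = -\overline{\theta}([\overline{G}\Sigma t]) = -\overline{\theta}([\overline{I}_t])$, where $\overline{I}_t = \sC(-,\Sigma^{2}t)|_{\sT}$ is the indecomposable injective at $t$. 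Part~(i) applied to $b$ yields $\ind b + \ind \Sigma b = -\overline{\theta}([\overline{G}b])$; the 2-Calabi--Yau version of \cite[Lem.~1.12(ii)]{HJ} identifies $\overline{G}b = \corad \overline{I}_t$, so $[\overline{G}b] = [\overline{I}_t] - [\overline{S}_t]$, and substitution gives $\ind b = \overline{\theta}([\overline{S}_t])$. The remaining sub-case $c \in \Sigma^{-1}\sT$ of part~(ii) is handled by an analogous bootstrap using \cite[Lem.~1.12(i)]{HJ} together with the case $c = t \in \sT$ applied to the shifted AR triangle at $\Sigma c \in \sT$. The principal obstacle is cleanly establishing Palu's index additivity on $\overline{G}$-short-exact triangles via the octahedral axiom; this is standard in cluster theory but requires careful bookkeeping.
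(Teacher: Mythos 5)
Your strategy --- prove (i) by reduction to simple modules and deduce (ii) from additivity of the index on triangles whose $\overline{G}$-image is short exact --- is close in spirit to the results of Palu that the paper simply cites, and your generic sub-case of (ii) (where $c \notin \sT \cup \Sigma^{-1}\sT$) is fine. But there are two genuine gaps. The more serious one is in part (ii), case $c = t \in \sT$: your bootstrap needs the AR triangle $\Sigma^2 t \to \Sigma b \to \Sigma t$ to fall into the generic sub-case, which requires not only $\Sigma t \notin \sT$ (which you prove correctly) but also $\Sigma^2 t \notin \sT$, and you flag the latter as a ``mild assumption''. It is not mild: it fails already in the paper's running example, where $t = \{\, 1,7 \,\} \in \indec\,\sT$ has $\Sigma^2 \{\, 1,7 \,\} = \{\, 5,7 \,\} \in \indec\,\sT$. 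When $\Sigma^2 t \in \sT$, your argument sends the case $c \in \sT$ to the case $c \in \Sigma^{-1}\sT$, which you in turn propose to handle ``by an analogous bootstrap using the case $c = t \in \sT$'' --- a circle. The paper avoids all of this by invoking \cite[prop.\ 2.2]{Palu} in its general form, valid for an arbitrary triangle: combined with (i) it yields $\ind b = \overline{\theta}\big( [ \Ker \overline{G}\varphi ] - [ \overline{G}c ] \big)$ for every AR triangle, after which one only computes $\Ker \overline{G}\varphi$; it is $0$ for $c \notin \sT$ (the AR property makes $\sC( t,b ) \to \sC( t,c )$ surjective, hence $\overline{G}\varphi$ injective --- note this covers $c \in \Sigma^{-1}\sT$ with no extra case), and it is $\overline{S}_t$ for $c = t \in \sT$ by \cite[lem.\ 1.12(ii)]{HJ}. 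You should either adopt that route or prove the two boundary cases without any hypothesis on $\Sigma^2 t$.

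The second gap is in part (i): the assertion that ``a composition series by simples can be inductively lifted to triangles in $\sC$'' whose $\overline{G}$-images are the corresponding short exact sequences is precisely the hard technical point, and you give no argument for it. One needs that $\overline{G}$ is full and dense onto $\fl\,\sT$ and that a surjection $\overline{G}c \twoheadrightarrow \overline{S}_t$ completes to a triangle inducing a short exact sequence --- the neighbouring terms of the long exact sequence do not vanish for free. The paper proves (i) by quoting \cite[1.5]{JP} and \cite[lem.\ 2.1(2)]{Palu}; a self-contained proof must supply this lifting. Your base case is essentially correct: $\ind t^{\ast} = [a'] - [t]$ and $\ind \Sigma t^{\ast} = [t] - [a]$ do sum to $-\overline{\theta}\big( [ \overline{S}_t ] \big)$. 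Note, however, that the exact sequence you display computes $\sC( s, t^{\ast} )$, not $\sC( s, \Sigma t^{\ast} )$; the identification $\overline{G}t^{\ast} \cong \overline{S}_t$ comes from the other exchange triangle $t^{\ast} \to a \to t \to \Sigma t^{\ast}$ and the right-approximation property of $a \to t$.
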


\begin{proof}
First note that \cite[lem.\ 2.1(2) and prop.\ 2.2]{Palu} apply to the present setup by \cite[1.3]{JP}.

(i)  Combine \cite[1.5]{JP} with \cite[lem.\ 2.1(2)]{Palu}.

(ii)  Observe that
\begin{equation}
\label{equ:Sigmab1}
  \ind b
  = \overline{ \theta } \big( [ \Ker \overline{ G }\varphi ] 
                              - [ \overline{ G }c ] \big).
\end{equation}
This can be seen by combining part (i) of the lemma with 
\cite[prop.\ 2.2]{Palu}.


The case $c \not\in \sT$: then $\sC( t,b ) \rightarrow \sC( t,c
)$ is surjective because $\Sigma c \stackrel{ \varphi }{
\longrightarrow } b \longrightarrow c$ is an AR triangle.  The long
exact sequence 
$\sC( t,b )
  \longrightarrow \sC( t,c )
  \longrightarrow \sC( t,\Sigma^2 c )
  \stackrel{ ( \Sigma \varphi )_* }{ \longrightarrow }
    \sC( t,\Sigma b )$
shows that $( \Sigma \varphi )_{ \ast } : \sC( t,\Sigma^2 c )
\rightarrow \sC( t,\Sigma b )$ is injective.  This implies that
$\overline{G}\varphi$ is injective whence Equation \eqref{equ:Sigmab1}
gives $\ind b = - \overline{ \theta } \big( [ \overline{ G }c ]
\big)$ as desired.

The case $c = t \in \sT$: then 
\[
  \overline{ G }( \Sigma c
  \stackrel{ \varphi }{ \longrightarrow } b
  \longrightarrow c )
  = \overline{ I }_t
  \stackrel{ \overline{ G }\varphi }{ \longrightarrow }
  \corad \overline{ I }_t
  \rightarrow
  0
\]
by \cite[lem.\ 1.12(ii)]{HJ}.  Here $\overline{ I }_t = \sC(
-,\Sigma^2 t )|_{ \sT }$ is the indecomposable injective object of
$\Mod\,\sT$ associated with $t$, and $\corad$ denotes the quotient by
the socle.  Hence $\Ker \overline{ G }\varphi = \overline{ S }_t$ and
$\overline{ G }c = 0$, whence Equation \eqref{equ:Sigmab1} reads $\ind
b = \overline{ \theta } \big( [ \overline{ S }_t ] \big)$ as desired.
\end{proof}

\begin{Theorem}
\label{thm:B}
Let 
\[
  \Delta 
   = \Sigma c \rightarrow b \rightarrow c
\]
be an AR triangle in $\sC$ such that $\overline{ G }c$ and
$\overline{ G }( \Sigma c )$ have finite length in $\Mod\, \sT$.

Then $Gc$ and $G( \Sigma c )$ have finite length in $\Mod\, \sR$, and
the maps $\alpha$ and $\beta$ from Definition \ref{def:alpha_and_beta}
are frieze-like for $\Delta$.
\end{Theorem}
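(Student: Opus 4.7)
The plan is two-part. First, I handle the length claim: since $i^{\ast}\overline{G} = G$ (noted in Subsection \ref{bfhpg:simples_and_K}) and $i^{\ast}$ restricts to an exact functor $\fl\,\sT \to \fl\,\sR$, the hypothesis that $\overline{G}c$ and $\overline{G}(\Sigma c)$ have finite length in $\Mod\,\sT$ immediately yields the finite length of $Gc$ and $G(\Sigma c)$ in $\Mod\,\sR$.

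The heart of the argument is an identity for $\alpha(c \oplus \tau c)$. Applying $\varepsilon \circ Q$ to Lemma \ref{lem:coind}(i), and then passing through the commutative square \eqref{equ:square} together with the identity $\kappa[\overline{G}c] = [Gc]$ (which holds because $G = i^{\ast}\overline{G}$), I obtain
\[
  \alpha(c \oplus \tau c)
  = \varepsilon Q \bigl(\ind c + \ind \Sigma c \bigr)
  = \varepsilon\bigl(-\theta[Gc]\bigr)
  = \beta([Gc])^{-1},
\]
the inverse being well defined because $\varepsilon$ is exponential with $\varepsilon(0) = 1$ and hence takes values in $A^{\times}$. This identity already settles the second equation in Definition \ref{def:good}(ii), and it specialises to $\alpha(c \oplus \tau c) = 1$ whenever $Gc = 0$; in particular whenever $c \in \sT$, which covers the first equation in Definition \ref{def:good}(iii).

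For the remaining equations I split on whether $c$ lies in $\sT$. If $c \notin \sT$ then Lemma \ref{lem:coind}(ii) gives $\ind b = -\overline{\theta}[\overline{G}c]$, and combining this with Lemma \ref{lem:coind}(i) yields $\ind b = \ind c + \ind \Sigma c$ already in $\K_0^{\split}(\sT)$, whence $\alpha(b) = \alpha(c \oplus \tau c)$; this covers all instances of Definition \ref{def:good}(i)--(ii) except when $c \in \sT \setminus \sR$, and it covers $c = \Sigma^{-1}r$ since $\sT \cap \Sigma^{-1}\sT = 0$ forces $\Sigma^{-1}r \notin \sT$. If $c = t \in \sT \setminus \sR$, then $Gc = 0$ makes $G(\Delta)$ split short exact, placing us in Definition \ref{def:good}(i); Lemma \ref{lem:coind}(ii) now gives $\ind b = \overline{\theta}[\overline{S}_t] = [a] - [a']$, which lies in $N$ by Definition \ref{def:N} precisely because $t \notin \indec\,\sR$, so $Q(\ind b) = 0$ and $\alpha(b) = 1 = \alpha(c \oplus \tau c)$. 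Finally, if $c = r \in \sR$, Lemma \ref{lem:coind}(ii) gives $\ind b = \overline{\theta}[\overline{S}_r]$, and the commutative square together with Equation \eqref{equ:kappa} produces $Q(\ind b) = \theta\kappa[\overline{S}_r] = \theta[S_r]$, whence $\alpha(b) = \beta([S_r])$ as required by Definition \ref{def:good}(iii).

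The real content of the theorem is encoded in Definition \ref{def:N}: the subgroup $N$ is engineered precisely so that $Q$ kills the contribution $\overline{\theta}[\overline{S}_t]$ coming from exchange triangles at indecomposables $t \notin \indec\,\sR$. The main pitfall to watch for is that the hypothesis $c \notin \sR \cup \Sigma^{-1}\sR$ of Definition \ref{def:good}(i)--(ii) does not exclude $c \in \sT \setminus \sR$, and this boundary case has to be treated via $N$ rather than via the first branch of Lemma \ref{lem:coind}(ii).
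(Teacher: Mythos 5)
Your proposal is correct and follows essentially the same route as the paper's proof: the same reduction via $G = i^{\ast}\overline{G}$, the identity $\alpha(c\oplus\tau c)\,\beta([Gc])=1$ obtained from Lemma \ref{lem:coind}(i) and the square \eqref{equ:square}, and the same case split ($c\notin\sT$, $c\in\sT\setminus\sR$ via $N$, and $c\in\sR$) using Lemma \ref{lem:coind}(ii). The only cosmetic difference is that you deduce $\alpha(c\oplus\tau c)=1$ for $c\in\sT$ from $Gc=0$ rather than from $\ind c=[t]$, $\ind\Sigma c=-[t]$ as the paper does.
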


\begin{proof}
The statement on lengths holds because $G = i^{ \ast }\overline{ G }$
and $i^{ \ast }$ preserves finite length, see Subsection
\ref{bfhpg:simples_and_K}.  We must now check the conditions of
Definition \ref{def:good}.

First, we show that
\begin{equation}
\label{equ:alpha2}
  \alpha( c \oplus \Sigma c )\beta \big( [ Gc ] \big) = 1,
\end{equation}
in particular establishing the second equation in Definition
\ref{def:good}(ii).  Equation \eqref{equ:alpha_beta} gives
\begin{equation}
\label{equ:alpha10}
  \alpha( c \oplus \Sigma c )
  = \varepsilon Q \big( \ind ( c \oplus \Sigma c ) \big)
  = \varepsilon Q( \ind c + \ind \Sigma c ).
\end{equation}
On the other hand, combining Equation \eqref{equ:alpha_beta}, the fact
that $[ Gc ] = [ i^{ \ast }\overline{ G }c ] = \kappa[ \overline{ G }c
]$, the commutative square \eqref{equ:square}, and Lemma
\ref{lem:coind}(i) gives
\[
  \beta \big( [ Gc ] \big)
  = \varepsilon \theta \big( [ Gc ] \big)
  = \varepsilon \theta \kappa \big( [ \overline{ G }c ] \big)
  = \varepsilon Q \overline{ \theta } \big( [ \overline{ G }c ] \big)
  = \varepsilon Q \big( - ( \ind c + \ind \Sigma c ) \big).
\]
Multiplying the last two equations proves Equation \eqref{equ:alpha2}. 

Secondly, if $c = t \in \sT$ then it is direct from the
definition of index and coindex that
\[
  \ind c = [ t ]
\;\;,\;\;
  \ind \Sigma c = - [ t ].
\]
Inserting into Equation \eqref{equ:alpha10} gives
\begin{equation}
\label{equ:alpha3}
  c \in \sT
  \; \Rightarrow \;
  \alpha( c \oplus \Sigma c ) = 1,
\end{equation}
in particular establishing the first equation in Definition
\ref{def:good}(iii).

Thirdly, suppose $c \not\in \sR$.  We will show
\begin{equation}
\label{equ:alpha}
  \alpha( b ) = \alpha( c \oplus \Sigma c ),
\end{equation}
establishing Definition \ref{def:good}(i) as well as the first
equation in Definition \ref{def:good}(ii).

The case $c = t \in \sT$: Note that $c \not\in \sR$ implies
$\overline{ \theta } \big( [ \overline{ S }_t ] \big) \in N$ by
Equations \eqref{equ:overlinetheta} and \eqref{equ:N}.  Using
Equation \eqref{equ:alpha_beta} and Lemma \ref{lem:coind}(ii)
therefore gives 
\[
  \alpha( b )
  = \varepsilon Q ( \ind b )
  = \varepsilon Q \overline{ \theta }\big( [ \overline{ S }_t ] \big)
  = \varepsilon( 0 )
  = 1.
\]
Combining with Equation \eqref{equ:alpha3} shows Equation
\eqref{equ:alpha}. 

The case $c \not\in \sT$: combining the two parts of Lemma
\ref{lem:coind} shows $\ind b = \ind c + \ind \Sigma c$.  Applying
$\varepsilon Q$ shows $\alpha( b ) = \alpha( c )\alpha( \Sigma c )$
which is equivalent to Equation \eqref{equ:alpha}.

Finally, suppose $c = r \in \sR$.  We show that
\[
  \alpha( b ) = \beta \big( [ S_r ] \big),
\]
establishing the second equation in Definition \ref{def:good}(iii).
Lemma \ref{lem:coind}(ii) says 
\[
  \ind b = \overline{ \theta } \big( [ \overline{ S }_r ] \big).
\]
Applying $\varepsilon Q$ gives the first of the following equalities. 
\[
  \alpha( b )
  = \varepsilon Q \overline{ \theta } \big( [ \overline{ S }_r ] \big)
  = \varepsilon \theta \kappa \big( [ \overline{ S }_r ] \big)
  = \varepsilon \theta \big( [ S_r ] \big)
  = \beta \big( [ S_r ] \big)
\]
The other equalities are by the commutative diagram \eqref{equ:square}
and Equations \eqref{equ:kappa} and \eqref{equ:alpha_beta}.
\end{proof}

\begin{Remark}
\label{rmk:app}
The maps $\alpha$ and $\beta$ from Definition
\ref{def:alpha_and_beta}, and hence the modified Caldero-Chapoton map
$\rho$ from Equation \eqref{equ:CC}, depend on the map $\varepsilon :
\K_0^{ \split }( \sT ) / N \rightarrow A$.  The possible choices of
$\varepsilon$ are determined by the structure of $\K_0^{ \split }( \sT
) / N$ which we do not know in general.

However, let us suppose that $\indec\, \sR$ and $\indec\, \sT$ are
finite, with $r$, respectively $r+s$, objects.  This is the situation from
Theorem B in the introduction if we set $R$, respectively $T$, equal
to the direct sum of the indecomposable objects in $\ind\,\sR$,
respectively $\ind\,\sT$.
Then we can set $A = \BZ[ x_1^{ \pm 1 }, \ldots, x_r^{ \pm 1
} ]$ and use all the variables $x_1, \ldots, x_r$, thereby proving
Theorem B.

Namely, $\K_0^{ \split }( \sT )$ is a free abelian group
on $r+s$ generators, one per object in $\indec\, \sT$, and the
subgroup $N$ has $s$ generators, one per object in $\indec\, \sT
\setminus \indec\, \sR$.  So $\K_0^{ \split }( \sT ) / N$ has a
quotient group $F$ which is free abelian of rank $( r+s ) - s = r$.
The desired map
$\varepsilon : \K_0^{ \split }( \sT ) / N \rightarrow \BZ[ x_1^{ \pm 1
}, \ldots, x_r^{ \pm 1 } ]$ can be obtained by sending each generator
of $F$ to a generator of $\BZ[ x_1^{ \pm 1 }, \ldots, x_r^{ \pm 1 }
]$.

Note that $\K_0^{ \split }( \sT )$ may have a quotient group which is
free abelian of rank $n > r$, see the example in Section
\ref{sec:example}.  In this case, the above method means that
we can even set $A = \BZ[ x_1^{ \pm 1 }, \ldots, x_n^{ \pm 1 } ]$ and
use all the variables $x_1, \ldots, x_n$.
\end{Remark}

\section{Example: a modified Caldero-Chapoton map on the cluster
category of Dynkin type $A_5$}
\label{sec:example}

This section shows how to obtain the example in Figure
\ref{fig:generalised_frieze} in the introduction.

\begin{Setup}
Let the category $\sC$ of Setups \ref{set:blanket} and \ref{set:2} be
$\sC( A_5 )$, the cluster category of Dynkin type $A_5$.  There is a
bijection between $\indec\, \sC$ and the diagonals of a $8$-gon, see
\cite[secs.\ 2 and 5]{CCS}.  We let $\indec\, \sR$, respectively
$\indec\, \sT$, be given by the red diagonals, respectively all the
red and blue diagonals, in Figure \ref{fig:8gon}.  These data satisfy
our assumptions, see \cite[sec.\ 1]{BMRRT}.
\begin{figure}
\[
  \begin{tikzpicture}[auto]
    \node[name=s, shape=regular polygon, regular polygon sides=8,
    minimum size=5cm, draw] at (0,0) {}; 
    \node[name=t, shape=regular polygon, regular polygon sides=8, minimum size=5.8cm] at (0,0) {}; 
    \draw[shift=(t.corner 1)] node {$1$};
    \draw[shift=(t.corner 2)] node {$2$};
    \draw[shift=(t.corner 3)] node {$3$};
    \draw[shift=(t.corner 4)] node {$4$};
    \draw[shift=(t.corner 5)] node {$5$};
    \draw[shift=(t.corner 6)] node {$6$};
    \draw[shift=(t.corner 7)] node {$7$};
    \draw[shift=(t.corner 8)] node {$8$};
    \draw[very thick, blue] (s.corner 1) to (s.corner 7);
    \draw[very thick, blue] (s.corner 2) to (s.corner 4);
    \draw[very thick, red] (s.corner 2) to (s.corner 5);
    \draw[very thick, red] (s.corner 2) to (s.corner 7);
    \draw[very thick, blue] (s.corner 5) to (s.corner 7);
  \end{tikzpicture} 
\]
\caption{The diagonals of the $8$-gon correspond to the indecomposable
objects of $\sC( A_5 )$.  The red diagonals define $\indec\, \sR$ and
all the red and blue diagonals define $\indec\, \sT$.}   
\label{fig:8gon}
\end{figure}
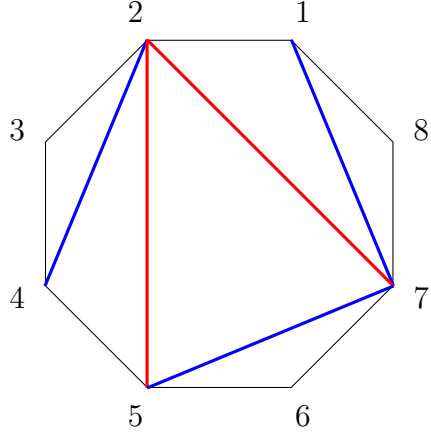
\end{Setup}

\begin{bfhpg}
[Some properties of $\sC$]
We denote diagonals and their corresponding indecomposable objects by
pairs of vertices, so $\{\, 2,7 \,\}$ is both a red diagonal in Figure
\ref{fig:8gon} and an object of $\indec\, \sR$.  The AR quiver of
$\sC$ and the objects of $\ind\,\sR$, respectively $\ind\,\sT$, are
shown in Figure \ref{fig:AR_quiver} in the introduction. 

At the level of objects, the suspension functor $\Sigma$ is given by
$\Sigma \{\, i,j \,\} = \{\, i-1,j-1 \,\}$.  Note that vertex numbers
are taken modulo $8$.

If $x, y \in \indec\, \sC$ then
\begin{equation}
\label{equ:C7_Homs}
  \sC( x,\Sigma y )
  =
  \left\{
    \begin{array}{cl}
      k & \mbox{ if the diagonals corresponding to $x$ and $y$ cross, } \\[2mm]
      0 & \mbox{ if not. }
    \end{array}
  \right.
\end{equation}
If $i,k,j,\ell$ are four vertices in anticlockwise
order on the polygon, then $\{\, i,j \,\}$ and $\{\, k,\ell \,\}$ are
crossing diagonals, and there are the following non-split distinguished
triangles,
\begin{equation}
\label{equ:exchange_triangles}
  \{\, i,j \,\}
  \rightarrow \{\, i,\ell \,\} \oplus \{\, j,k \,\}
  \rightarrow \{\, k,\ell \,\}
  \;\; , \;\;
  \{\, k,\ell \,\}
  \rightarrow \{\, i,k \,\} \oplus \{\, j,\ell \,\}
  \rightarrow \{\, i,j \,\},
\end{equation}
where a pair of neighbouring vertices must be interpreted as $0$.
\end{bfhpg}

\begin{bfhpg}
[$\K$-theory]
The category $\sT$ has the following indecomposable objects.
\[
  \{\, 1,7 \,\}
  \;\;,\;\; \{\, 2,4 \,\}
  \;\;,\;\; \{\, 2,5 \,\}
  \;\;,\;\; \{\, 2,7 \,\}
  \;\;,\;\; \{\, 5,7 \,\}
\]
Their $\K_0^{ \split }$-classes are free generators of $\K_0^{ \split
}( \sT )$.  To save parentheses, the classes are denoted $[ 1,7 ]$
etc.  The objects in $\indec\, \sT \setminus \indec\, \sR$ are
\[  
    \{\, 1,7 \,\} \;\;,\;\; \{\, 2,4 \,\} \;\;,\;\; \{\, 5,7 \,\},
\]
and Equation \eqref{equ:exchange_triangles} means that they sit in the
following exchange triangles.
\[
  \xymatrix @R=1ex {
    \{\, 2,8 \,\} \ar[r] & 0 \ar[r] & \{\, 1,7 \,\}
    & \{\, 1,7 \,\} \ar[r] & \{\, 2,7 \,\} \ar[r] & \{\, 2,8 \,\} \\
    \{\, 3,5 \,\} \ar[r] & 0 \ar[r] & \{\, 2,4 \,\}
    & \{\, 2,4 \,\} \ar[r] & \{\, 2,5 \,\} \ar[r] & \{\, 3,5 \,\} \\
    \{\, 2,6 \,\} \ar[r] & \{\, 2,7 \,\} \ar[r] & \{\, 5,7 \,\}
    & \{\, 5,7 \,\} \ar[r] & \{\, 2,5 \,\} \ar[r] & \{\, 2,6 \,\}
            }
\]
Accordingly, the subgroup $N$ of Definition \ref{def:N} is
\[
  N = \big\langle\,
        - [ 2,7 ] \;,\; - [ 2,5 ] \;,\; [ 2,7 ] - [ 2,5 ]
      \,\big\rangle \\[2mm]
    = \big\langle\,
        [ 2,5 ] \;,\; [ 2,7 ]
      \,\big\rangle,
\]
and $\K_0^{ \split }( \sT ) / N$ is the free abelian group generated
by
\[
  [ 1,7 ] + N
  \;\;,\;\; [ 2,4 ] + N
  \;\;,\;\; [ 5,7 ] + N.
\]

The category $\fl\, \sR$ has the simple objects
\[
  S_{\{\, 2,5 \,\}} \;\;,\;\; S_{\{\, 2,7 \,\}}
\]
whose $\K_0$-classes are free generators of $\K_0( \fl\, \sR )$, and
$\fl\, \sT$ has the simple objects
\[
  \overline{ S }_{\{\, 1,7 \,\}}
  \;\;,\;\; \overline{ S }_{\{\, 2,4 \,\}}
  \;\;,\;\; \overline{ S }_{\{\, 2,5 \,\}}
  \;\;,\;\; \overline{ S }_{\{\, 2,7 \,\}}
  \;\;,\;\; \overline{ S }_{\{\, 5,7 \,\}}
\]
whose $\K_0$-classes are free generators of $\K_0( \fl\, \sT )$.  To
save parentheses, the simple objects will be denoted $S_{ 2,5 }$,
respectively $\overline{ S }_{ 1,7 }$, etc.
\end{bfhpg}

\begin{Definition}
Let the map
\[
  \varepsilon : \K_0^{ \split }( \sT ) / N
                \rightarrow
                \BZ[ u^{ \pm 1 } , v^{ \pm 1 } , z^{ \pm 1 } ]
\]
be given by
\begin{equation}
\label{equ:epsilon}
  \varepsilon \big( [ 1,7 ] + N \big) = u    
\;\;,\;\;
  \varepsilon \big( [ 2,4 ] + N \big) = v    
\;\;,\;\;
  \varepsilon \big( [ 5,7 ] + N \big) = z.   
\end{equation}
Equation \eqref{equ:alpha_beta} defines the maps $\alpha$ and $\beta$, 
and the modified Caldero-Chapton map $\rho$ is defined by Equation
\eqref{equ:CC}.
\end{Definition}

\begin{Example}
Let us compute $\rho \big( \{\, 4,6 \,\} \big)$.

Equation \eqref{equ:alpha_beta} gives
\[
  \alpha \big( \{\, 4,6 \,\} \big)
  = \varepsilon Q( \ind \{\, 4,6 \,\} )
  = ( \ast ).
\]
Now $\{\, 4,6 \,\} = \Sigma \{\, 5,7 \,\}$ so $\ind \{\, 4,6 \,\} =
\ind \Sigma \{\, 5,7 \,\} = - [ 5,7 ]$, where the last equality is
direct from the definition of index because $\{\, 5,7 \,\} \in \sT$,
see Subsection \ref{bfhpg:ind}.  Hence Equation \eqref{equ:epsilon}
gives
\[
  ( \ast )
  = \varepsilon Q \big( -[ 5,7 ] \big)
  = \varepsilon \big( -[ 5,7 ] + N \big)
  = z^{ -1 }.
\]


We have $G \big( \{\, 4,6 \,\} \big) = \sC( -,\Sigma \{\, 4,6 \,\}
)|_{ \sR }$.  Moreover, $\sR = \add \big\{\, \{\, 2,5 \,\} , \{\, 2,7
\,\} \,\big\}$, and it is direct from Equation
\eqref{equ:C7_Homs} that $G \big( \{\, 4,6 \,\} \big)$ is supported
only at $\{\, 2,5 \,\}$ where it has the value $k$.  That is,
\[
  G \big( \{\, 4,6 \,\} \big) = S_{ 2,5 }.
\]
It follows that the only non-empty Grassmannians appearing in Equation
\eqref{equ:CC} when computing $\rho \big( \{\, 4,6 \,\} \big)$ are
$\Gr_0 \big( G\{\, 4,6 \,\} \big)$ and $\Gr_{ [ S_{ 2,5 } ] } \big(
G\{\, 4,6 \,\} \big)$, and it is clear that each is a point so has
Euler characteristic $1$.

Finally, Equations \eqref{equ:kappa} and \eqref{equ:alpha_beta} and
diagram \eqref{equ:square} give 
\[
  \beta \big( [ S_{ 2,5 } ] \big)
  = \varepsilon \theta \big( [ S_{ 2,5 } ] \big)
  = \varepsilon \theta \kappa 
    \big( [ \overline{ S }_{ 2,5 } ] \big)
  = \varepsilon Q \overline{ \theta }
    \big( [ \overline{ S }_{ 2,5 } ] \big)
  = ( \ast \ast ).
\]
Equation \eqref{equ:exchange_triangles} gives exchange triangles
\[
  \xymatrix @R=1ex {
    \{\, 4,7 \,\} \ar[r] & \{\, 2,4 \,\} \oplus \{\, 5,7 \,\} \ar[r] & \{\, 2,5 \,\}
    & \{\, 2,5 \,\} \ar[r] & \{\, 2,7 \,\} \ar[r] & \{\, 4,7 \,\}
                   }
\]
and Equation \eqref{equ:overlinetheta} gives $\overline{ \theta }
\big( [ \overline{ S }_{ 2,5 } ] \big) = [ 2,4 ] + [ 5,7 ] -
[ 2,7 ]$ whence Equation \eqref{equ:epsilon} gives
\[
  ( \ast \ast )
  = \varepsilon Q
    \big( [ 2,4 ] + [ 5,7 ] - [ 2,7 ] \big)
  = vz.
\]

Hence Equation \eqref{equ:CC} says
\begin{align*}
  \rho \big( \{\, 4,6 \,\} \big)
  & = \alpha \big( \{\, 4,6 \,\} \big)
      \sum_e \chi \big( \Gr_e( G\{\, 4,6 \,\} ) \big) \beta( e ) \\
  & = z^{ -1 } \cdot \Big(
      \chi \big( \Gr_0( S_{ 2,5 } ) \big) 
         \beta( 0 )
      + \chi \big( \Gr_{[ S_{ 2,5 } ]}( S_{ 2,5 } ) \big) 
         \beta \big( [S_{ 2,5 }] \big)
                    \Big) \\
  & = z^{ -1 } \cdot ( 1 + vz ) \\
  & = \frac{ 1+vz }{ z }.
\end{align*}

Similar computations for the other indecomposable objects finally
produce the generalised frieze in Figure \ref{fig:generalised_frieze}
in the introduction.
\end{Example}

\section{Questions}
\label{sec:questions}

We end the paper with some questions.

\begin{enumerate}

\item  The group $\K_0^{ \split }( \sT )$ is free abelian on $\indec\,
  \sT$, and the subgroup $N$ of Definition \ref{def:N} is generated by
  all expressions $[a] - [a']$ where $s^{ \ast } \rightarrow a
  \rightarrow s$ and $s \rightarrow a' \rightarrow s^{ \ast }$ are
  exchange triangles with $s \in \indec\, \sT \setminus \indec\, \sR$.

\medskip
\noindent
What is the rank $n$ of the quotient $\K_0^{ \split }( \sT ) / N$?
Note that when $n$ is finite, it is the largest integer such that the
method of Remark \ref{rmk:app} results in a modified Caldero-Chapoton
map $\rho : \obj \sC \rightarrow \BZ[ x_1^{ \pm 1 }, \ldots, x_n^{ \pm
  1 } ]$ using all the variables $x_1, \ldots, x_n$.

\medskip

\item  Consider the $\BZ$-subalgebra of $\BZ[ x_1^{ \pm 1 }, \ldots, x_n^{
    \pm 1 } ]$ generated by the values of the modified Caldero-Chapoton map $\rho$. 

\medskip
\noindent
  What is its relation to the cluster algebra?

\medskip

\item  Let $T$ be a cluster tilting object and use it to define a
  Caldero-Chapoton map $X$.  If $T$ is subjected to cluster mutation,
  then the values of $X$ change in a well-understood way, see
  \cite[proof of cor.\ 5.4]{Palu}.

\medskip
\noindent
  There is a notion of mutation of rigid objects due to
  \cite[sec.\ 2]{MP}.  What happens to the values of the modified
  Caldero-Chapoton map under such mutation?

\end{enumerate}

\medskip
\noindent
{\bf Acknowledgement.}
This paper is a direct continuation of \cite{HJ}.   Both papers grew
out of \cite{BHJ} with Christine Bessenrodt, and we are grateful to
her for the fruitful collaboration.

We thank the referee for several interesting comments and Robert Marsh
for answering a question about rigid subcategories.

Part of this work was done while Peter J\o rgensen was visiting the
Leibniz Universit\"{a}t Hannover.  He thanks Christine Bessenrodt,
Thorsten Holm, and the Institut f\"{u}r Algebra, Zahlentheorie und
Diskrete Mathematik for their hospitality.  He gratefully acknowledges
support from Thorsten Holm's grant HO 1880/5-1, which falls under the
research priority programme SPP 1388 {\em Darstellungstheorie} of the
Deutsche Forschungsgemeinschaft (DFG).

\end{document}